\newcommand{\bd}{\begin{description}}
\newcommand{\ed}{\end{description}}
\newcommand{\bi}{\begin{itemize}}
\newcommand{\ei}{\end{itemize}}
\newcommand{\be}{\begin{enumerate}}
\newcommand{\ee}{\end{enumerate}}
\newcommand{\beq}{\begin{equation}}
\newcommand{\eeq}{\end{equation}}
\newcommand{\beqs}{\begin{eqnarray*}}
\newcommand{\eeqs}{\end{eqnarray*}}
\newcommand{\Rmnum}[1]{\expandafter\@slowromancap\romannumeral #1@}
\definecolor{DarkGreen}{rgb}{0.2, 0.6, 0.3}
\newtheorem{theorem}{Theorem}[section]
\newtheorem{lemma}{Lemma}[section]
\newtheorem{definition}{Definition}
\newtheorem{claim}{Claim}
\newtheorem{remark}{Remark}[section]
\newtheorem{example}{Example}[section]
\begin{document}
\title{Transversals in a collections of trees}
\author{\small Ethan Y.H. Li$^a$, Luyi Li$^{b,c}$, Ping Li$^a$\\
{\small $^a$School of Mathematics and Statistics}\\
{\small  Shaanxi Normal University, Xi'an, Shaanxi 710062, China}\\
{\small $^b$Center for Combinatorics and LPMC}\\
{\small Nankai University, Tianjin 300071, China}\\
{\small $^c$Laboratoire Interdisciplinaire des Sciences du Num\'erique}\\
{\small CNRS-Universit\'e Paris-Saclay, Orsay 91405, France}\\
{\small Emails: yinhao\_li@snnu.edu.cn, liluyi@mail.nankai.edu.cn, lp-math@snnu.edu.cn}
}

\date{}

\maketitle

\begin{abstract}

Let $\mathcal{S}$ be a fixed family of graphs on vertex set $V$ and $\mathcal{G}$ be a collection of elements in $\mathcal{S}$. We investigated the transversal problem of finding the maximum value of $|\mathcal{G}|$ when $\mathcal{G}$ contains no rainbow elements in $\mathcal{S}$. Specifically, we determine the exact values when $\mathcal{S}$ is a family of stars or a family of trees of the same order $n$ with $n$ dividing $|V|$. Further, all the extremal cases for $\mathcal{G}$ are characterized.\\
{\bf Keywords:} Transversal; a family of graphs; rainbow tree; rainbow star; extremal graph\\[2mm]
{\bf AMS subject classification 2020:} 05C15, 05C05, 05D15.
\end{abstract}

\section{Introduction}


In 1974, D\'{e}nes and Keedwell \cite{DK} conjectured that every $n\times n$ Latin square has a set of entries of order $n-1$ which contains at most one representative
of each row and column and no symbol is repeated.
Every such set is called a {\em partial transversal} of the Latin square.
A lot of scholars have made contributions to this conjecture in the last few decades, see \cite{KK,DD,BVW,W,HS}.
From another point of view, an equivalent statement of the conjecture is: for any proper edge-coloring of the balanced complete bipartite graph $K_{n,n}$ with $n$ colors, the edge-colored graph $K_{n,n}$ has a rainbow matching of size at least $n-1$.

More generally, this concept may be extended as follows: given a collection of graphs $\mathcal{G}=\{G_1,G_2,\ldots,G_t\}$ (not necessarily distinct) on vertex set $V$ and a graph $H$, $\mathcal{G}$ is said to contain a \emph{rainbow $H$} if there exists a graph isomorphic to $H$ consisting of at most one edge from each $G_i$.
We say that $\mathcal{G}$ is \emph{rainbow $H$-free} if
$\mathcal{G}$ contains no rainbow $H$. Using this concept, the well-known Rota's basis conjecture (restricted to graphic matroids) may be reformulated as: any collection $\mathcal{G}=\{G_1,G_2,\ldots,G_n\}$ of spanning trees of a graph $H$ of order $n+1$ contains $n$ disjoint rainbow spanning trees of $H$. As the first step of our approach to this conjecture, we intend to find the minimum size of $\mathcal{G}$ preserving the existence of one rainbow tree of a given order or structure. Equivalently, we aim to determine the maximum size for $\mathcal{G}$ to exclude any rainbow tree of a given order or structure, which is analogous to Ramsey and Tur\'{a}n problems.

There have been plenty of researchers interested in these problems, and they also studied rainbow graphs other than trees. For instance, Aharoni, DeVos, de la Maza, Montejano and \v{S}\'amal \cite{ADGMS}
gave a rainbow version of Mantel's theorem:
a collection $\mathcal{G}=\{G_1,G_2,G_3\}$ of $n$-graphs with $|E(G_i)|>\frac{1+\tau^2}{4}n^2$ for all $1\leq i\leq 3$ contains
a rainbow triangle,
where $\tau=\frac{4-\sqrt7}{9}$.
In 2020, Joos and Kim \cite{JK} proved a rainbow version of Dirac's theorem: if $\mathcal{G}=\{G_i: i\in [n]\}$ is a collection of not necessarily distinct $n$-graphs with the same vertex set of order $n$
and $\delta(G_i)\geq \frac{n}{2}$ for $i\in[n]$, then there exists a rainbow Hamiltonian cycle in $\mathcal{G}$.
For more results on rainbow structures in a family of graphs, please refer to \cite{Pe,BHS,CHWW,CWZ,LLL,MMP,FHM}.

In addition to the two above results which require edge or degree conditions, researchers also considered the problems of finding a rainbow graph $H$ from $\mathcal{G}$ where $H$ and elements of $\mathcal{G}$ belong to the same class.
Aharoni, Briggs, Holzman and Jiang \cite{ABHJ} proved that every family
of $2\left\lceil \frac{n}{2}\right\rceil-1$ odd cycles on $n$ vertices
contains a rainbow odd cycle.
Dong and Xu \cite{DX} showed that any collection
of $\left\lfloor\frac{6(n-1)}{5}\right\rfloor+1$ even cycles on $n$ vertices
contains a rainbow even cycle.
Moreover, Goorevitch and Holzman \cite{GH} proved that every family
of $(1+o(1))\frac{n^2}{8}$ triangles on $n$ vertices
contains a rainbow triangle.

In this paper, we continue to investigate this topic on rainbow stars and general rainbow trees, and the main results are the following two theorems.

\begin{theorem}\label{main-0}
Let $\mathcal{S}$ be a collection of stars $K_{1,\Delta}$ on vertex set $V$ with $|V|=n=a(2\Delta-1)+b$ and $b(\Delta-1)=k_1(2\Delta-1)+k_2$, where $a,b,k_1,k_2$ are nonnegative integers and $0\leq b,k_2\leq 2\Delta-2$.
Then the maximum value of $|\mathcal{S}|$ for $\mathcal{S}$ to be rainbow $K_{1,\Delta}$-free is
$$
\left\{
\begin{array}{ll}
a(\Delta-1)^2+k_1(\Delta-1), &  \mbox{if }a\geq 1\mbox{ and }0\leq k_2\leq\Delta; \vspace{0.05cm}\\

a(\Delta-1)^2+k_1(\Delta-1)+k_2-\Delta,
&  \mbox{if }a\geq 1\mbox{ and }\Delta \le k_2\leq 2\Delta-2;\\
\left\lfloor\frac{(n-1)^2}{4}\right\rfloor, & \mbox{if }a=0.\\
\end{array}
\right.$$
Moreover, the bounds are tight, and $\mathfrak{A}(n,\Delta)$ (defined in Section \ref{sec-2}) is the set of all rainbow $K_{1,\Delta}$-free collections $\mathcal{S}$ with $|\mathcal{S}|$ attaining these maximum values.
\end{theorem}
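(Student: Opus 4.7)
The plan is to prove Theorem \ref{main-0} by (i) exhibiting constructions attaining the claimed values and (ii) establishing a matching upper bound via a vertex-local obstruction, with the analysis of equality pinning down the family $\mathfrak{A}(n,\Delta)$.

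For the construction when $a\geq 1$, partition $V$ into $a$ blocks of size $2\Delta-1$ together with a residual block of size $b$. Within each block of size $2\Delta-1$, further split the vertices as $A\cup B$ with $|A|=\Delta-1$ and $|B|=\Delta$, and for each $v\in A$ include exactly $\Delta-1$ copies of the star centered at $v$ with leaf set $B$. A direct vertex-by-vertex check confirms rainbow-freeness: at $v\in A$ one has $A_v=\Delta-1$ and $B_v=0$, while at $u\in B$ one has $A_u=0$ and $B_u=(\Delta-1)^2$ but only $\Delta-1$ distinct centers appear, so the associated bipartite matching cannot reach size $\Delta$. This contributes $(\Delta-1)^2$ stars per block. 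On the residual block one installs an analogous asymmetric scheme driven by $b(\Delta-1)=k_1(2\Delta-1)+k_2$, producing the remaining $k_1(\Delta-1)$ stars (or $k_1(\Delta-1)+k_2-\Delta$ when $k_2\geq\Delta$). For $a=0$, a Tur\'an-type construction with all stars sharing a common center and an optimally-sized common leaf set attains $\lfloor(n-1)^2/4\rfloor$.

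For the upper bound, the main local tool is the bipartite auxiliary graph $H_v$ at each $v\in V$: its left side is $\mathcal{S}_v=\{S\in\mathcal{S}:v\in V(S)\}$, its right side is $V\setminus\{v\}$, and $(S,u)$ is an edge iff $vu\in E(S)$. The key observation is that rainbow $K_{1,\Delta}$'s centered at $v$ correspond exactly to matchings of size $\Delta$ in $H_v$, so rainbow-freeness of $\mathcal{S}$ forces, by K\"onig's theorem, a vertex cover of $H_v$ of size at most $\Delta-1$. Writing $A_v$ for the number of stars centered at $v$ and $B_v$ for the number of stars containing $v$ as a leaf, the cover analysis immediately yields $A_v\leq\Delta-1$ (each star centered at $v$ has $\Delta$ leaves and thus cannot be covered from the right by fewer than $\Delta$ vertices), and it exhibits a set $Q_v\subseteq V\setminus\{v\}$ of size at most $\Delta-1-A_v$ such that all but at most $\Delta-1$ of the leaf-stars in $\mathcal{S}_v$ have their center in $Q_v$.

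To convert these local inequalities into the global bound, I would combine them with the incidence identity $\sum_{v\in V}(\Delta A_v+B_v)=2\Delta|\mathcal{S}|$ and then run an LP-style optimization whose extreme points match the stated formula; the split $k_2<\Delta$ versus $k_2\geq\Delta$ reflects whether the residual block has enough ``capacity'' to host a full size-$\Delta$ leaf set. The hardest part, I expect, is the characterization of equality: when the bound is tight, I would use the K\"onig witnesses from the previous paragraph to identify a $\Delta$-subset $B\subseteq V$ acting as a shared leaf block for many stars, together with its matching $(\Delta-1)$-subset $A$ of centers, and then peel off $A\cup B$ and induct on $|V|$. Resolving the residual-block arithmetic exactly, and separately handling the Tur\'an-type $a=0$ regime, will require more delicate case analysis.
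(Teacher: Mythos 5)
Your local tool is sound and is in fact equivalent to the paper's key step: a K\"onig cover of $H_v$ of size at most $\Delta-1$ is exactly the combination of Claim~\ref{clm-star-1} with the bound $d^-_{\overrightarrow{D}}(v)\le\Delta-1$, i.e.\ $|\mathcal{S}_v|+|N^-(v)|\le\Delta-1$ where $N^-(v)$ is the set of \emph{distinct} centers whose stars use $v$ as a leaf. But the global step you propose does not close: the identity $\sum_{v}(\Delta A_v+B_v)=2\Delta|\mathcal{S}|$ counts leaf-incidences with multiplicity, while the cover constraint controls only the number of distinct centers at $v$, so the two do not mesh and no LP in $(A_v,B_v)$ alone produces the $k_2$ threshold. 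What is needed (and what the paper extracts from its auxiliary digraph via the handshake identity on $\overrightarrow{D}[C]$) is to sum the cover constraints into $|\mathcal{S}|+\sum_{u\in C}|N^+(u)|\le n(\Delta-1)$, note that $|N^+(u)|\ge\Delta$ for every center $u$ to get $|\mathcal{S}|\le n(\Delta-1)-\Delta|C|$, and then play this against $|C|\ge\lceil|\mathcal{S}|/(\Delta-1)\rceil$; the dichotomy at $k_2=\Delta$ falls out of this pair of inequalities.

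The more serious failures are on the construction side. For $a=0$ your construction is simply wrong: with a single common center the cover bound caps $|\mathcal{S}|$ at $\Delta-1$, far below $\lfloor(n-1)^2/4\rfloor$; and even the natural repair, a complete bipartite scheme $C\vee L$ with $|L|=\Delta$, gives only $(n-\Delta)(\Delta-1)$, which falls short of $(n-1)^2/4$ by $\left(\frac{2\Delta-1-n}{2}\right)^2$ and hence is strictly suboptimal for all $n\le 2\Delta-3$. The genuine extremal examples in this range must use the centers as leaves of one another's stars ($\overrightarrow{D}[C]$ is $(\Delta-|L|)$-out-regular in Definition~\ref{def-1}(iv)). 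Your $a\ge1$ block construction also breaks at the residual block: when $b<\Delta+k_1$ the residual centers cannot find $\Delta$ leaves inside their own block, and borrowing leaves from a full block raises the number of distinct centers seen by those leaves to $\Delta$, creating a rainbow star; the leaf sets must instead be spread cyclically over all of $L$, as in Lemma~\ref{lem}. Finally, the ``peel off $A\cup B$ and induct'' plan for the equality characterization cannot work as stated, because for $k_2<\Delta$ the extremal families need not decompose into blocks at all and the stars at a fixed center need not even share a common leaf set (see the example $\mathcal{S}^*\in\mathfrak{A}(8,3)$ in Figure~\ref{pict-1}); the paper instead reads off the structure from the equality conditions in its chain of inequalities.
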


\begin{theorem}\label{main-1}
Let $\mathcal{T}=\{T_1,\ldots,T_t\}$ be a collection of trees on $V$ with $|V|=m$, $|T_i|=n$ for each $i\in[t]$ and $n|m$.
Then the value of $t$ is at most $\frac{m(n-2)}{n}$ if $\mathcal{T}$ contains no rainbow tree of order $n$.
Moreover, the bounds are tight, and $\mathfrak{B}(n,m)$ (defined in Section \ref{sec-thm2}) is the set of all such collections $\mathcal{T}$ with $|\mathcal{S}|$ attaining the maximum value.
\end{theorem}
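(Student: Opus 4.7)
The plan is to proceed by induction on $m$, strengthened to cover all vertex counts $m\ge 0$ (not just multiples of $n$) via the bound $t\le\lfloor m/n\rfloor(n-2)$; this strengthening is needed so that the inductive hypothesis can be applied to sub-instances of arbitrary size. The main tool will be a greedy algorithm that iteratively enlarges a partial rainbow tree, together with a separate lemma resolving the base case.

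For the base case $m=n$, one needs that any $n-1$ spanning trees of $K_n$ contain a rainbow spanning tree. I would prove this via Edmonds' matroid intersection theorem applied to the graphic matroid $M_1$ of $K_n$ and the transversal matroid $M_2$ induced by the color classes $E(T_1),\ldots,E(T_{n-1})$: for any $A\subseteq E(K_n)$, setting $k=|\{i:E(T_i)\subseteq A\}|$, if $k\ge 1$ then $A$ contains a spanning tree and $r_{M_1}(A)=n-1$, while if $k=0$ then $r_{M_2}(E(K_n)\setminus A)=n-1$. Either way the rank condition $r_{M_1}(A)+r_{M_2}(E(K_n)\setminus A)\ge n-1$ is met, yielding a rainbow spanning tree.

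For the inductive step, I would start the greedy from an arbitrary edge $e_1\in T_{i_1}$ and, at each stage with a rainbow tree $R$ on a vertex set $U$ assembled from $\ell$ distinct trees, extend $R$ by choosing an edge from an unused tree having exactly one endpoint in $U$. If the algorithm reaches $|U|=n$ it produces the desired rainbow tree of order $n$, contradicting the hypothesis. Otherwise it halts at some $|U|=\ell+1<n$; since each unused tree $T_j$ is connected with $|V(T_j)|=n>|U|$ and no crossing edge is available, $V(T_j)\cap U=\emptyset$ forcibly, so all $t-\ell$ unused trees live in $V\setminus U$ of size $m-\ell-1$. The induction hypothesis then gives $t-\ell\le\lfloor(m-\ell-1)/n\rfloor(n-2)$. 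When $n\mid m$, a short check shows $\lfloor(m-\ell-1)/n\rfloor=m/n-1$ for every $\ell\in[1,n-2]$, yielding $t\le(n-2)+(m/n-1)(n-2)=m(n-2)/n$ as required.

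The main obstacle is establishing the strengthened bound at non-multiple sizes, which the above induction requires as a hypothesis. When $m=Kn+r$ with $r\ge 2$, the greedy may halt at some $\ell\le r-1$, in which case the plain inductive bound only gives $t\le\ell+K(n-2)$, exceeding the target $K(n-2)$. Closing this gap requires either carefully choosing the starting edge $e_1$ so that halting necessarily occurs at $\ell\ge r$, or arguing structurally that the halted configuration itself produces a rainbow tree on an $n$-subset straddling $U$ and $V\setminus U$, exploiting the vertex sets of the already-used trees. The extremal characterization of $\mathfrak B(n,m)$ then follows by tracing equality throughout the induction, which forces the vertex sets of the trees in $\mathcal T$ to partition $V$ into $m/n$ blocks of size $n$ with exactly $n-2$ trees on each block.
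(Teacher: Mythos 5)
There is a genuine gap, and you have correctly located it yourself: your induction never closes. After the first greedy phase halts at $|U|=\ell+1\le n-1$, the residual instance lives on $m-\ell-1$ vertices, which is \emph{never} a multiple of $n$; so even to prove the theorem in the case $n\mid m$ you must already possess the strengthened bound $t\le\lfloor m'/n\rfloor(n-2)$ at non-multiple sizes $m'$, and that is exactly the statement you leave unproven. Worse, the obstacle is not merely technical: a one-step ``peel off $U$ and recurse'' argument provably cannot deliver even the weaker floored bound $t\le\lfloor m(n-2)/n\rfloor$ at non-multiple sizes, because the required inequality $\ell+\lfloor(m-\ell-1)(n-2)/n\rfloor\le\lfloor m(n-2)/n\rfloor$ fails (e.g.\ $n=4$, $m=7$, $\ell=2$ gives $4\not\le 3$). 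The information your recursion discards is precisely what is needed: each of the $\ell$ trees used to build $U$ still has $n-|U|\ge 1$ vertices protruding into $V\setminus U$, and these protrusions constrain the later pieces.

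The paper's proof exploits exactly this. It extracts maximal rainbow trees $F_1,\dots,F_s$ whose vertex sets $U_1,\dots,U_s$ partition $V$, so that $t=\sum_i(|U_i|-1)=m-s$, and then double-counts: for each later block $U_i$ it lets $r_i$ be the number of earlier-used trees meeting $U_i$, proves that $F_i$ can be extended by one edge from each such tree to a rainbow tree of order $|U_i|+r_i\le n-1$ (Claim \ref{clm-tree-2}), that every earlier-used tree contributes to some $r_i$ so $\sum_i r_i\ge\sum_{i<s}(|U_i|-1)$ (Claim \ref{clm-tree-3}), and that $|U_s|=1$ (Claim \ref{clm-tree-4}); summing yields $2(m-s)\le s(n-2)$, hence $s\ge 2m/n$ and $t\le m(n-2)/n$, with no induction on $m$ at all. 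Your base case via matroid intersection is correct (and a nice alternative to the paper's implicit treatment of $m=n$), but it does not help with the inductive step, and the extremal characterization ``by tracing equality'' cannot be assessed until the upper-bound argument is repaired. To salvage your plan you would need to replace the single greedy step by the global accounting above, or independently prove the strengthened bound at all $m$.
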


%



The proofs will be presented in the following two sections, before which we would like to introduce some additional notation.
For a positive integer $n$ we use $[n]$ to denote the set $\{1,2,\ldots, n\}$.
For a graph $G$, we use $V(G)$ and $E(G)$ to denote the \emph{vertex set} and
\emph{edge set} of $G$, respectively.
For any two vertex sets $X$ and $Y$, we use
$X\vee Y$ to denote a graph obtained by adding an edge between each vertex of $X$ and each vertex of $Y$.
For a subset $X$ of $V$, $\partial_G(X)$ denotes the set of edges between $X$ and $V-X$ in $E(G)$.

For a digraph $\overrightarrow{D}$, we use $V(\overrightarrow{D})$ and $A(\overrightarrow{D})$ to denote \emph{vertex set} and
\emph{arc set} of $\overrightarrow{D}$, respectively.
For a subset $X$ of $V(\overrightarrow{D})$, $\overrightarrow{D}[X]$ denotes
the subdigraph of $\overrightarrow{D}$ induced by $X$.
Given a vertex $v$ in a digraph $\overrightarrow{D}$, we use $N_{\overrightarrow{D}}^+(v)$ and $N_{\overrightarrow{D}}^-(v)$ to denote the set of \emph{out-neighbours} and \emph{in-neighbours} of $v$ in $\overrightarrow{D}$, respectively.
The \emph{out-degree} (resp. \emph{in-degree}) of $v$ in $\overrightarrow{D}$,
denoted by $d_{\overrightarrow{D}}^+(v)$ (resp. $d_{\overrightarrow{D}}^-(v)$), is the number of out-neighbours (resp. in-neighbours) of $v$ in $\overrightarrow{D}$.
A digraph is \emph{$d$-out-regular} (resp. \emph{$d$-in-regular}) if the out-degrees (resp. in-degrees) of all vertices are $d$.

\section{Proof of Theorem \ref{main-0}}\label{sec-2}
This section is devoted to proving Theorem \ref{main-0}. At first, we present some basic notation and give the definition of $\mathfrak{A}(n,\Delta)$, which describes all the extremal cases of Theorem \ref{main-0}.

For a collection $\mathcal{S}$ of stars $K_{1,\Delta}$ on vertex set $V$, we denote by $\mathcal{S}_u$ the set of all stars with center $u$. We partition $V$ into the set of centers $C=\{u\in V:\mathcal{S}_u\neq \emptyset\}$ and the set the other vertices $L=V-C$, and further set $\overrightarrow{D}$ to be the digraph with $$V(\overrightarrow{D})=V \quad \mbox{and} \quad A(\overrightarrow{D})=\{(x,y):xy \mbox{ is an edge of }\bigcup_{S\in \mathcal{S}_x}E(S) \}.$$
Then it is easy to verify that between any pair of vertices in $C$ there are at most two arcs and parallel arcs will not appear (symmetric arcs may exist). In addition, there is no symmetric arc or parallel arc between $C$ and $L$, and $L$ is an independent set in $\overrightarrow{D}$.

\begin{definition}\label{def-1}
Let $V, n, \Delta, a, b,k_1,k_2$ be defined as in Theorem \ref{main-0}.
We define $\mathfrak{A}(n,\Delta)$ to be the set of collections $\mathcal{S}$ on vertex set $V$ consisting of stars $K_{1,\Delta}$ satisfying the following conditions:
\begin{enumerate}
\item [(i)] for $n \ge 2\Delta-1$ and $k_2<\Delta$, there are $a(\Delta-1)^2+k_1(\Delta-1)$ stars in $\mathcal{S}$ such that
     \begin{itemize}
       \item $|C|=a(\Delta-1)+k_1$;
       \item for each vertex $u\in C$, there are exactly $\Delta-1$ stars $K_{1,\Delta}$ with center $u$, and all leaves of these stars are in $L$;
       \item for each vertex $v\in L$, $d_{\overrightarrow{D}}^-(v)\leq \Delta-1$.
     \end{itemize}

\item [(ii)] for $n \ge 2\Delta-1$ and $k_2>\Delta$, there are $a(\Delta-1)^2+k_1(\Delta-1)+k_2-\Delta$ stars $K_{1,\Delta}$ in $\mathcal{S}$ such that
    \begin{itemize}
      \item $|C|=a(\Delta-1)+k_1+1$ and $|A(\overrightarrow{D}[C])|=2\Delta-1-k_2$;
      \item for each vertex $u\in C$, there are exactly $\Delta-1-d_{\overrightarrow{D}}^-(u)$ copies of  $K_{1,\Delta}$ with center $u$;
      \item for each vertex $v\in L$, $d_{\overrightarrow{D}}^-(v)=\Delta-1$.
    \end{itemize}

\item [(iii)] for $n \ge 2\Delta-1$ and $k_2=\Delta$, there are $a(\Delta-1)^2+k_1(\Delta-1)=a(\Delta-1)^2+k_1(\Delta-1)+k_2-\Delta$ stars in $\mathcal{S}$, and $\mathcal{S}$ satisfies conditions of $(i)$ or $(ii)$.

\item [(iv)] for $\Delta+1\leq n\leq 2\Delta-2$, there are $\left\lfloor\frac{(n-1)^2}{4}\right\rfloor$ stars $K_{1,\Delta}$ in $\mathcal{S}$ such that
\begin{itemize}
\item if $n$ is odd, then $|C|=\left\lfloor\frac{n-1}{2}\right\rfloor$; if $n$ is even, then either $|C|=\left\lfloor\frac{n-1}{2}\right\rfloor$ or $|C|=\left\lceil\frac{n-1}{2}\right\rceil$;

\item $\overrightarrow{D}[C]$ is $(\Delta-|L|)$-out-regular;

\item for each vertex $u\in C$, $\mathcal{S}_u$ consists of $\Delta-1-d_{\overrightarrow{D}}^-(u)$ copies of $K_{1,\Delta}$, and $L$ is contained in the set of leaves of each star;
\end{itemize}
\end{enumerate}

\end{definition}

Now we shall present some examples to illustrate this definition.

\begin{figure}[ht]
    \centering
    \includegraphics[width=350pt]{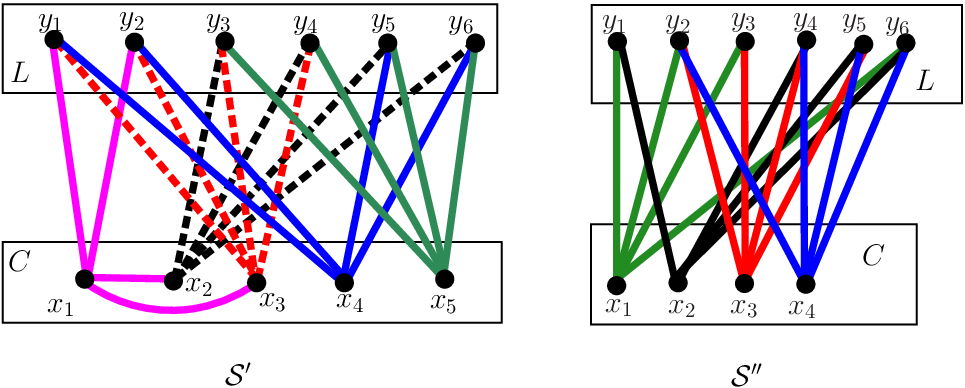}\\
    \caption{$\mathcal{S}'\in \mathfrak{A}(11,4)$ \mbox{ and } $\mathcal{S}'\in \mathfrak{A}(10,4)$.} \label{pict-2}
\end{figure}

\begin{example}
In the left part of Figure \ref{pict-2}, $\mathcal{S}'$ is an element of $\mathfrak{A}(n,\Delta)$ with $n=11$, $\Delta =4$, $a=k_1=1$, $b=4$ and $k_2=5>\Delta$.
Each solid monochromatic star represents three copies of stars, and each dashed monochromatic star represents two copies of stars.
Then $|\mathcal{S}'|=13=a(\Delta-1)^2+k_1(\Delta-1)+k_2-\Delta$ and $\overrightarrow{D}[C]$ has two arcs $(x_1,x_2)$ and $(x_1,x_3)$.
In the right part, $\mathcal{S}''$ belongs to $\mathfrak{A}(n,\Delta)$ with $n=10$, $\Delta=4$, $a=k_1=1$, $b=3$ and $k_2=2<\Delta$.
Each solid monochromatic star represents three copies of stars.
$\mathcal{S}''$ satisfies all conditions of $(i)$.
\end{example}

\begin{figure}[ht]
    \centering
    \includegraphics[width=250pt]{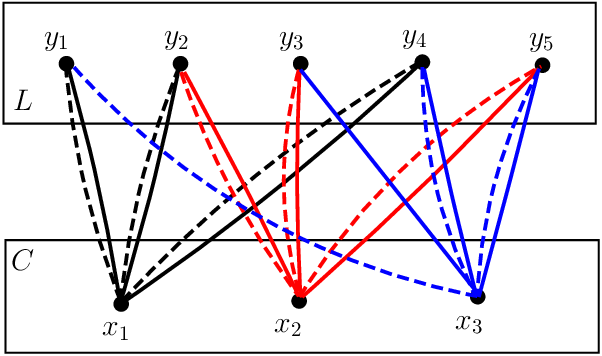}\\
    \caption{$\mathcal{S}^*\in \mathfrak{A}(8,3)$} \label{pict-1}
\end{figure}

\begin{remark}
If $n<2\Delta-1$ or $n\geq 2\Delta-1$ and $k_2> \Delta$, then for each vertex $u\in C$, $\mathcal{S}_u$ consists of copies of the same star with center $u$, that is, any two stars in $\mathcal{S}_u$ have the same leaves.
However, if $n\geq 2\Delta-1$ and $k_2< \Delta$, then
it is not necessary for
the stars in $\mathcal{S}_u$ to have the same leaves, see $\mathcal{S}^*\in \mathfrak{A}(8,3)$ of Figure \ref{pict-1} as an example. Here each solid or dashed monochromatic star represents only one star.
\end{remark}

We will confirm that the definition of $\mathfrak{A}(n,\Delta)$ is meaningful, that is, $\mathfrak{A}(n,\Delta)\neq \emptyset$ and each element of $\mathfrak{A}(n,\Delta)$ is rainbow $K_{1,\Delta}$-free.
\begin{lemma}\label{lemm}
If $\mathfrak{A}(n,\Delta)\neq \emptyset$, then each $\mathcal{S}\in \mathfrak{A}(n,\Delta)$ is rainbow $K_{1,\Delta}$-free.
\end{lemma}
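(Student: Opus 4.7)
The plan is to proceed by contradiction. Suppose some $\mathcal{S} \in \mathfrak{A}(n,\Delta)$ contains a rainbow $K_{1,\Delta}$; unpacking the definition supplies a vertex $v \in V$, $\Delta$ pairwise distinct stars $S_1,\dots,S_\Delta \in \mathcal{S}$, and $\Delta$ pairwise distinct edges $e_1,\dots,e_\Delta$ all incident to $v$, with $e_i \in E(S_i)$ for every $i$.

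I would then split the $S_i$ according to their centre. The \emph{Type A} stars (those whose centre is $v$) all lie in $\mathcal{S}_v$, hence contribute at most $|\mathcal{S}_v|$ indices. A \emph{Type B} star is centred at the other endpoint $u_i$ of its edge $e_i$. Since the $e_i$ are pairwise distinct and each $e_i$ determines its non-$v$ endpoint, the $u_i$'s arising from Type B are pairwise distinct elements of $N_{\overrightarrow{D}}^-(v)$, so Type B contributes at most $d_{\overrightarrow{D}}^-(v)$ indices. Combining gives the key inequality
\[
\Delta \;\leq\; |\mathcal{S}_v| + d_{\overrightarrow{D}}^-(v),
\]
and the whole lemma therefore reduces to verifying
\[
|\mathcal{S}_v| + d_{\overrightarrow{D}}^-(v) \;\leq\; \Delta - 1 \qquad \text{for every } v \in V
\]
in each of the cases $(i)$--$(iv)$ of Definition~\ref{def-1}, which would contradict the previous display.

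What remains is a short case check, and this is where the only small amount of work lives. In case $(ii)$ both quantities are directly prescribed: $|\mathcal{S}_u| = \Delta - 1 - d_{\overrightarrow{D}}^-(u)$ for $u \in C$, while for $v \in L$ one has $|\mathcal{S}_v|=0$ and $d_{\overrightarrow{D}}^-(v) = \Delta - 1$. Case $(iv)$ is analogous for $v \in C$; for $v \in L$ I would first note that $d_{\overrightarrow{D}}^-(v) = |C|$, because every star in $\mathcal{S}_u$ with $u \in C$ contains $L$ in its leaf set, and then bound $|C| \leq \lceil (n-1)/2 \rceil \leq \Delta - 1$ using the hypothesis $n \leq 2\Delta - 2$. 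In case $(i)$, for $v \in L$ the bound is immediate from the definition, while for $v \in C$ I need the additional observation that $d_{\overrightarrow{D}}^-(v) = 0$: any arc entering $v$ would have to start in $C$ (because $L$ is independent in $\overrightarrow{D}$), but under $(i)$ every star centred in $C$ has all of its leaves in $L$, so no such arc exists. Case $(iii)$ reduces directly to $(i)$ or $(ii)$. The only subtle point is the Type B bookkeeping---several stars centred at the same in-neighbour $u$ of $v$ all contribute the same edge $uv$, so only one can appear among the $S_i$---but the distinctness of the $e_i$ makes this automatic.
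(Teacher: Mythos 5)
Your argument is correct and is essentially the paper's own proof: the paper likewise observes that any rainbow star centred at a vertex $w$ has at most $|\mathcal{S}_w|+d_{\overrightarrow{D}}^-(w)$ leaves and that this quantity is at most $\Delta-1$ by Definition~\ref{def-1}. Your case-by-case verification of $|\mathcal{S}_w|+d_{\overrightarrow{D}}^-(w)\le\Delta-1$ (including the observations that $d_{\overrightarrow{D}}^-(u)=0$ for $u\in C$ in case $(i)$ and $d_{\overrightarrow{D}}^-(v)=|C|\le\lceil(n-1)/2\rceil\le\Delta-1$ for $v\in L$ in case $(iv)$) just makes explicit what the paper asserts in one line.
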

\begin{proof}
For each vertex $u\in C$ (resp. $v\in L$), each rainbow star with center $u$ has
at most $|\mathcal{S}_u|+d_{\overrightarrow{D}}^-(u)$ (resp. $d_{\overrightarrow{D}}^-(v)$) leaves.
By the definition of $\mathfrak{A}(n,\Delta)$ we have $|\mathcal{S}_u|+d_{\overrightarrow{D}}^-(u)\leq \Delta-1$ and $d_{\overrightarrow{D}}^-(v)\leq \Delta-1$, and hence $\mathcal{S}$ is rainbow $K_{1,\Delta}$-free.
\end{proof}

\begin{lemma}\label{lem}
For $n \ge \Delta + 1 \ge 3$, $\mathfrak{A}(n,\Delta)\neq \emptyset$.
\end{lemma}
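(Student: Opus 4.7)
The plan is to construct an explicit $\mathcal{S}\in\mathfrak{A}(n,\Delta)$ in each of the four regimes of Definition \ref{def-1}. In every case $\mathcal{S}$ will be encoded by a digraph $\overrightarrow{D}$ on $V$ with prescribed center set $C$ and controlled out- and in-degrees; the stars in each $\mathcal{S}_u$ are built as repeated copies of a single star with center $u$ whose leaf set equals $N_{\overrightarrow{D}}^+(u)$. The arithmetic identity $b(\Delta-1)=k_1(2\Delta-1)+k_2$ will be used repeatedly to make the various degree sums balance.

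For case (i), I would take $|C|=a(\Delta-1)+k_1$ and $|L|=a\Delta+b-k_1$. The identity immediately yields $|C|\Delta\le|L|(\Delta-1)$, so a bipartite graph $B$ from $C$ to $L$ in which every center has degree exactly $\Delta$ and every leaf has degree at most $\Delta-1$ exists; an explicit cyclic assignment $N_B(u_i)=\{v_{i\Delta},\ldots,v_{i\Delta+\Delta-1}\}$ (indices modulo $|L|$) realizes such a graph. Letting $\mathcal{S}_u$ consist of $\Delta-1$ identical copies of the star with leaves $N_B(u)$ fulfills every condition of (i) and yields $|\mathcal{S}|=a(\Delta-1)^2+k_1(\Delta-1)$.

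For case (ii), I would set $|C|=a(\Delta-1)+k_1+1$ and $|L|=a\Delta+b-k_1-1$; since $k_2>\Delta$ and $a\ge 1$ force $2\Delta-1-k_2<\Delta-1\le|C|-1$, I pick a distinguished $u_0\in C$ together with vertices $w_1,\ldots,w_{2\Delta-1-k_2}\in C\setminus\{u_0\}$ and let $\overrightarrow{D}[C]$ consist of the arcs $(u_0,w_i)$. The star at $u_0$ is then built with the $w_i$'s plus $k_2+1-\Delta$ extra leaves in $L$, while every other center draws all $\Delta$ of its leaves from $L$. A direct check using the identity shows that the number of out-arcs from $C$ to $L$ is exactly $|L|(\Delta-1)$, and $|L|\ge\Delta+1$ (again from the identity) leaves ample room; a Gale--Ryser type (or explicit cyclic) argument then produces a bipartite graph from $C$ to $L$ in which each $v\in L$ has in-degree exactly $\Delta-1$. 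Case (iii), where $k_2=\Delta$, is covered by either of the preceding constructions.

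For case (iv), I would set $|C|=\lfloor(n-1)/2\rfloor$, so $|L|\ge 2$, and take $\overrightarrow{D}[C]$ to be the circulant digraph $u_i\to u_{i+1},\ldots,u_{i+\Delta-|L|}$ with indices modulo $|C|$; this is well defined because $n\ge\Delta+1$ forces $\Delta-|L|\le|C|-1$, and every vertex has both in- and out-degree $\Delta-|L|$. Setting $\mathcal{S}_u$ to be $|L|-1=\Delta-1-d_{\overrightarrow{D}}^-(u)$ copies of the star with leaves $L\cup N_{\overrightarrow{D}[C]}^+(u)$ satisfies the remaining conditions, and $|\mathcal{S}|=|C|(|L|-1)=\lfloor(n-1)^2/4\rfloor$ for both parities of $n$. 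I expect the main obstacle to be the bipartite realization in case (ii), where the degree sequence on $C$ is asymmetric (one vertex of degree $k_2+1-\Delta$, the rest of degree $\Delta$) while the $L$-degrees must be attained with equality; however, this is precisely the scenario the defining identity is built for, so a short verification of the Gale--Ryser inequalities should suffice.
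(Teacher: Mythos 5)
Your construction coincides with the paper's own proof in all essentials: the same choices of $|C|$ and $|L|$ in each regime, the same star shapes (a distinguished center with $2\Delta-1-k_2$ out-arcs into $C$ in case (ii), circulant out-neighbourhoods plus all of $L$ in case (iv)), and the same use of the identity $b(\Delta-1)=k_1(2\Delta-1)+k_2$ to bound the leaf in-degrees, the only cosmetic difference being your optional appeal to Gale--Ryser where the paper just writes down the cyclic assignment. The proposal is correct.
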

\begin{proof}
We only need to construct a collection $\mathcal{S}\in \mathfrak{A}(n,\Delta)$ for each $n\geq \Delta+1$.
Let $C=\{x_1,x_2,\ldots,x_p\}$ and $L=\{y_1,y_2,\ldots,y_q\}$, where by definition $n = p+q$.

{\bf Case 1} $n\geq 2\Delta-1$ and $k_2\leq \Delta$.

Let $p=a(\Delta-1)+k_1$ and $q=a\Delta+b-k_1.$
For each vertex $x_i$ of $C$, let $\mathcal{T}_i$ be the (multi)set of $\Delta-1$ copies of $$x_i\vee\{y_{(i-1)\Delta+1},y_{(i-1)\Delta+2},\ldots,y_{i\Delta}\},$$
where the subscripts are taken modulo $q$. Then we define $\mathcal{S}$ to be the union of $\mathcal{T}_1,\mathcal{T}_2,\ldots,\mathcal{T}_p$.
Now we only need to show that $\mathcal{S}$ satisfies the third condition of $(i)$.
It is straightforward to verify that $d_{\overrightarrow{D}}^-(y_i)$ and $d_{\overrightarrow{D}}^-(y_j)$ differs by at most one for any $i,j\in[q]$.
Hence for each $i\in[q]$ we have
\begin{align*}
d_{\overrightarrow{D}}^-(y_i) \leq \left\lceil \frac{p\Delta}{q}\right\rceil &=\left\lceil\frac {\Delta\left(a(\Delta-1)+k_1\right)
}{a\Delta+b-k_1}\right\rceil\\
&=\left\lceil\frac{k_1(2\Delta-1)-b(\Delta-1)}
{a\Delta+b-k_1}\right\rceil + \Delta-1 \\
& \leq \Delta-1,
\end{align*}
where by assumption $b(\Delta-1)=k_1(2\Delta-1)+k_2$.

{\bf Case 2} $n\geq 2\Delta-1$ and $k_2\geq\Delta$.

Let $p=a(\Delta-1)+k_1+1$ and  $q=a\Delta+b-k_1-1.$
Note that $2\Delta-1-k_2\leq \Delta-1$. We construct $\mathcal{S}$ as follows (the subscripts are taken modulo $q$).
 \begin{itemize}
   \item Let $\mathcal{T}_1$ be the set of $\Delta-1$ copies of the star $K_{1,\Delta}$ with center $x_1$ and the set of leaves being $\{y_1,\ldots,y_{k_2-\Delta+1},x_2,\ldots,x_{2\Delta-k_2}\}$.
   \item For $2\leq i\leq 2\Delta-k_2$, let $\mathcal{T}_i$ be the set of $\Delta-2$ copies of the star $K_{1,\Delta}$ with center $x_i$ and the set of leaves being $\{y_{k_2+(i-2)\Delta+2},\ldots,y_{k_2+(i-1)\Delta+1}\}$.
   \item For $2\Delta-k_2+1\leq i\leq p$, let $\mathcal{T}_i$ be the set of $\Delta-1$ copies of the star $K_{1,\Delta}$ with center $x_i$ and the set of leaves being $\{y_{k_2+(i-2)\Delta+2},\ldots,y_{k_2+(i-1)\Delta+1}\}$.
 \end{itemize}
Now it is easy to verify that $\mathcal{S}$ satisfies the first and second condition of $(ii)$. Then it remains to prove $d_{\overrightarrow{D}}^-(y_i) \le \Delta-1$ for each $i \in [q]$.
Similar to Case 1, one can verify that $d_{\overrightarrow{D}}^-(y_i)$ and $d_{\overrightarrow{D}}^-(y_j)$ differs by at most one for any $i,j\in[q]$.
Hence for each $i\in[q]$ we have
\begin{align*}
d_{\overrightarrow{D}}^-(y_i) \leq \left\lceil \frac{p\Delta-(2\Delta-1-k_2)}{q}\right\rceil
 = \Delta-1.
\end{align*}

{\bf Case 3}  $\Delta+1\leq n\leq 2\Delta-2$.

Let $p=\left\lfloor\frac{n-1}{2}\right\rfloor$ when $n$ is odd, and $p=\left\lfloor\frac{n-1}{2}\right\rfloor$ or $\left\lfloor\frac{n+1}{2}\right\rfloor$ when $n$ is even.
For each vertex $x_i$ of $C$, let $\mathcal{T}_i$ be the set of $q-1$ copies of $$x_i\vee (L\cup \{x_{i+1},x_{i+2},\ldots,x_{i+\Delta-q}\})$$
(the subscripts are taken modulo $q$).
Let $\mathcal{S}=\mathcal{T}_1\cup \mathcal{T}_2\cup\cdots \cup\mathcal{T}_{|C|}$.
Then it is not difficult to verify $\mathcal{S}$ satisfies all the conditions in $(iv)$ since $n=p+q$.
\end{proof}

{\bf Proof of Theorem \ref{main-0}:}
Assume that $\mathcal{S}$ is a collection of stars $K_{1,\Delta}$ on the vertex set $V$ and $\mathcal{S}$ is rainbow $K_{1,\Delta}$-free.
Then $|\mathcal{S}_u|\leq \Delta-1$ for each $u\in C$ and $d_{\overrightarrow{D}}^-(v)\leq \Delta-1$ for each $v\in L$, respectively.
For simplicity of notation, we set $\overrightarrow{H}=\overrightarrow{D}[C]$ and $d_u=d_{\overrightarrow{H}}^-(u)=d^-_{\overrightarrow{D}}(u)$ for all $u\in C$.
Then there is a rainbow star $K_{1,d_u}$ centered at $u$ with $N_{\overrightarrow{H}}^-(u)$ being the set of leaves.
Moreover, no edge of the rainbow star belongs to $\bigcup_{S\in \mathcal{S}_u}E(S)$. This observation yields the following claim.
\begin{claim}\label{clm-star-1}
For each $u\in C$, $|\mathcal{S}_u|\leq\Delta-d_u-1$.
\end{claim}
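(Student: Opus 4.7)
The plan is to argue by contradiction: assume $|\mathcal{S}_u| \geq \Delta - d_u$ and construct a rainbow $K_{1,\Delta}$ centered at $u$, thereby contradicting the hypothesis that $\mathcal{S}$ is rainbow $K_{1,\Delta}$-free. The preceding observation already gives us a rainbow star $K_{1,d_u}$ centered at $u$ with leaves exactly $N^-_{\overrightarrow{H}}(u)$, built by choosing, for each $v \in N^-_{\overrightarrow{H}}(u)$, a star $S_v \in \mathcal{S}_v$ containing the edge $uv$. These $d_u$ representative stars lie in $\bigcup_{v\neq u}\mathcal{S}_v$, so they are disjoint from $\mathcal{S}_u$ and we have not yet spent any "color" from $\mathcal{S}_u$.

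Next, I would extend this rainbow $K_{1,d_u}$ greedily. Pick any $\Delta - d_u$ distinct stars $T_1,\ldots,T_{\Delta-d_u} \in \mathcal{S}_u$ (possible because $|\mathcal{S}_u| \ge \Delta - d_u$). Process them in order: when selecting a leaf $\ell_i$ of $T_i$, the set of forbidden vertices consists of the $d_u$ leaves already used together with the $i-1$ leaves chosen in previous steps, a total of $d_u + i - 1$. Since $T_i \cong K_{1,\Delta}$ has $\Delta$ leaves and
\[
d_u + i - 1 \;\le\; d_u + (\Delta - d_u) - 1 \;=\; \Delta - 1 \;<\; \Delta,
\]
some leaf of $T_i$ is unused, so the choice of $\ell_i$ is always possible. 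Note also that $u$ itself is never a candidate, since $u$ is the center of every $T_i$, not a leaf.

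Concatenating the two stages yields $\Delta$ distinct edges incident to $u$: the $d_u$ edges $\{uv : v \in N^-_{\overrightarrow{H}}(u)\}$ coming from the stars $S_v$ ($v \ne u$), and the $\Delta - d_u$ edges $\{u\ell_i\}$ coming from the stars $T_i \in \mathcal{S}_u$. All $\Delta$ source stars are distinct because the first batch has centers distinct from $u$ while the second has center $u$, and within each batch we chose distinct stars. Hence we obtain a rainbow $K_{1,\Delta}$ centered at $u$, contradicting the assumption on $\mathcal{S}$. Therefore $|\mathcal{S}_u| \le \Delta - d_u - 1$, as claimed.

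There is no serious obstacle here; the argument is essentially a one-line greedy extension. The only thing to be careful about is bookkeeping: verifying that the stars $S_v$ drawn from $\bigcup_{v \in N^-_{\overrightarrow{H}}(u)} \mathcal{S}_v$ are automatically disjoint from $\mathcal{S}_u$, so that the $\Delta$ chosen stars are genuinely pairwise distinct and the resulting $K_{1,\Delta}$ is truly rainbow.
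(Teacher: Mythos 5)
Your proof is correct and follows essentially the same route as the paper: both start from the rainbow $K_{1,d_u}$ on $N_{\overrightarrow{H}}^-(u)$ (whose colors come from $\bigcup_{v\neq u}\mathcal{S}_v$) and greedily extend it by $\min\{|\mathcal{S}_u|,\Delta-d_u\}$ further leaves drawn from distinct stars of $\mathcal{S}_u$, avoiding the at most $\Delta-1$ already-used vertices. The paper phrases the greedy step by first deleting $N_{\overrightarrow{H}}^-(u)$ from each star of $\mathcal{S}_u$, but the counting is identical to yours.
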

\begin{proof}
For each $S\in \mathcal{S}_u$, let $S'=S-N_{\overrightarrow{H}}^-(u)$.
Then $\{S':S\in \mathcal{S}_u\}$ is a collection of stars on vertex set $V-N_{\overrightarrow{H}}^-(u)$.
Since each $S'$ has at least $\Delta-d_u$ leaves, $\{S':S\in \mathcal{S}_u\}$ contains a rainbow star $K_{1,z}$, where $z=\min\{|\mathcal{S}_u|,\Delta-d_u\}$.
Then there exists a rainbow star $K_{1,d_u+z}$ with center $u$ since
the set of the colors appearing in $K_{1,z}$ and the set of the colors appearing in $K_{1,d_u}$ are disjoint.
If $|\mathcal{S}_u|\geq\Delta-d_u$, then $d_u+z=\Delta$, a contradiction.
Hence $|\mathcal{S}_u|\leq\Delta-d_u-1$.
\end{proof}
By Claim \ref{clm-star-1}, we have
\begin{align}\label{eq-1}
\sum_{u\in C}(\Delta-d_u-1)\geq \sum_{u\in C}|\mathcal{S}_u|= |\mathcal{S}|.
\end{align}
Note that for each vertex $u\in C$,
\begin{equation}\label{eq-du-Delta}
\Delta\leq d_{\overrightarrow{D}}^+(u)=d_{\overrightarrow{H}}^+(u)+ |N_{\overrightarrow{D}}^+(u)\cap L|.
\end{equation}
Then we have
\begin{equation}\label{eq-1-1}
\begin{split}
\sum_{u\in C}|N_{\overrightarrow{D}}^+(u)\cap L|&=\sum_{u\in C}(d^+_{\overrightarrow{D}}(u)-d_{\overrightarrow{H}}^+(u))\\
&\geq \sum_{u\in C}(\Delta-d_{\overrightarrow{H}}^+(u))\\
&=|C|\Delta-\sum_{u\in C}d_{\overrightarrow{H}}^+(u)\\
&=|C|\Delta-\sum_{u\in C}d_{\overrightarrow{H}}^-(u)=\sum_{u\in C}(\Delta-d_u).
\end{split}
\end{equation}
By the rainbow $K_{1,\Delta}$-freeness of $\mathcal{S}$, $d_{\overrightarrow{D}}^-(v)=|N_{\overrightarrow{D}}^-(v)\cap C|\leq \Delta-1$ for any $v\in L$. Then
\begin{equation}\label{eq-2}
\begin{split}
(\Delta-1)|L| \ge \sum_{v\in L}|N_{\overrightarrow{D}}^-(v)\cap C|=\sum_{u\in C}|N_{\overrightarrow{D}}^+(u)\cap L|
\end{split}
\end{equation}
Combining inequalities (\ref{eq-1}), (\ref{eq-1-1}) and (\ref{eq-2}),
\begin{equation}\label{eq-final-0}
\begin{split}
|L|(\Delta-1)&\geq \sum_{u\in C}|N_{\overrightarrow{D}}^+(u)\cap L| \\
&\ge\sum_{u\in C}(\Delta-d_u)\\
&=\sum_{u\in C}(\Delta-d_u-1)+|C|\geq|\mathcal{S}|+|C|.
\end{split}
\end{equation}
Furthermore, since $|L|=n-|C|$, it follows that
\begin{align}\label{ineq-00}
|\mathcal{S}|\leq n(\Delta-1)-|C|\Delta.
\end{align}

We continue our proof by considering the following two cases.

{\bf Case 1:} $a\geq 1$.

Suppose to the contrary that
\begin{equation}\label{eq-exgraph-L1}
|\mathcal{S}|\geq a(\Delta-1)^2+k_1(\Delta-1)+\mu,
\end{equation}
where
$$\mu=\left\{
  \begin{array}{ll}
    1, &  \hbox{ if } 0\leq k_2\leq \Delta; \\
    k_2-\Delta+1, & \hbox{ if } \Delta \le k_2\leq 2\Delta-2.
  \end{array}
\right.$$
Since $|\mathcal{S}_u|\leq \Delta-1$ for each $u\in C$, it follows that
\begin{equation}\label{eq-bigc}
|C|\geq \left\lceil\frac{|\mathcal{S}|}{\Delta-1}\right\rceil \geq a(\Delta-1)+k_1+1.
\end{equation}
Recall that $n=a(2\Delta-1)+b$, it follows from inequality (\ref{ineq-00}) that
\begin{equation}\label{eq-exgraph-1}
\begin{split}|\mathcal{S}|&\leq n(\Delta-1)-|C|\Delta \\
&\leq (\Delta-1)(a(2\Delta-1)+b)-\Delta\left(a(\Delta-1)+k_1+1\right)\\
&=a(\Delta-1)^2+b(\Delta-1)-(k_1+1)\Delta.
\end{split}
\end{equation}
Combining inequalities \eqref{eq-exgraph-L1} and \eqref{eq-exgraph-1}, we obtain
\begin{equation*}\label{eq-exgraph-2}
\begin{split}
a(\Delta-1)^2+b(\Delta-1)-(k_1+1)\Delta \geq a(\Delta-1)^2+k_1(\Delta-1)+\mu.
\end{split}
\end{equation*}
which yields
\begin{align*}
k_1(2\Delta-1)+\mu \le b(\Delta-1)-\Delta = k_1(2\Delta-1)+k_2-\Delta.
\end{align*}
Then we have $k_2-\Delta\geq \mu$, a contradiction.
Therefore, we can get the upper bound of $|\mathcal{S}|$:
\begin{align*}
|\mathcal{S}| &\leq a(\Delta-1)^2+k_1(\Delta-1)+\mu-1\\
&=\left\{
\begin{array}{ll}
a(\Delta-1)^2+k_1(\Delta-1), &  \mbox{if }a\geq 1\mbox{ and }0\leq k_2\leq\Delta; \vspace{0.05cm}\\
a(\Delta-1)^2+k_1(\Delta-1)+k_2-\Delta,
&  \mbox{if }\Delta\leq k_2\leq 2\Delta-2,\\
\end{array}
\right.
\end{align*}
as stated in Theorem \ref{main-0}.

In Lemma \ref{lemm} and \ref{lem} we have shown that these bounds are tight, and we now  characterize all the collections $\mathcal{S}$ with $|S|$ attaining the maximum values.

\begin{claim}\label{clm-star-2}
The extremal cases of $\mathcal{S}$ satisfies the following properties:
\begin{enumerate}
  \item if $\Delta<k_2\leq 2\Delta-2$, then $|C|=a(\Delta-1)+k_1 +1$;
  \item if $k_2=\Delta$, then  either $|C|=a(\Delta-1)+k_1 +1$ or $|C|=a(\Delta-1)+k_1$;
  \item if $0\leq k_2\leq \Delta-1$, then $|C|=a(\Delta-1)+k_1$.
\end{enumerate}
\end{claim}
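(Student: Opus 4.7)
The plan is to sandwich $|C|$ between two bounds that are already available, then match them against each extremal value of $|\mathcal{S}|$. For the lower bound, I would use the trivial observation $|\mathcal{S}_u|\le\Delta-1$ for each $u\in C$ (which itself follows immediately from the rainbow $K_{1,\Delta}$-freeness by a greedy choice of one edge from each star), yielding
\[
|C|\ \ge\ \left\lceil\frac{|\mathcal{S}|}{\Delta-1}\right\rceil.
\]
For the upper bound, I would just rearrange inequality (\ref{ineq-00}) into
\[
|C|\ \le\ \frac{n(\Delta-1)-|\mathcal{S}|}{\Delta}.
\]

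Next, in each of the three $k_2$-regimes I would substitute the extremal value of $|\mathcal{S}|$ stated by Theorem \ref{main-0} and simplify using the standing identities $n=a(2\Delta-1)+b$ and $b(\Delta-1)=k_1(2\Delta-1)+k_2$. After this substitution the numerator of the upper bound telescopes: it becomes $\Delta(a(\Delta-1)+k_1)+k_2$ when $|\mathcal{S}|=a(\Delta-1)^2+k_1(\Delta-1)$, and it becomes $\Delta(a(\Delta-1)+k_1+1)$ when $|\mathcal{S}|=a(\Delta-1)^2+k_1(\Delta-1)+k_2-\Delta$. The ceiling in the lower bound is equally easy to pin down: $|\mathcal{S}|/(\Delta-1)$ is either exactly $a(\Delta-1)+k_1$, or equals $a(\Delta-1)+k_1+(k_2-\Delta)/(\Delta-1)$ with $0<k_2-\Delta<\Delta-1$, whose ceiling is $a(\Delta-1)+k_1+1$.

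Assembling these numbers finishes each case. When $0\le k_2\le \Delta-1$ the lower bound is $a(\Delta-1)+k_1$ while the upper bound equals $a(\Delta-1)+k_1+k_2/\Delta<a(\Delta-1)+k_1+1$, so $|C|=a(\Delta-1)+k_1$. When $\Delta<k_2\le 2\Delta-2$ both bounds collapse to $a(\Delta-1)+k_1+1$. When $k_2=\Delta$ the two bounds differ by exactly one, leaving the two admissible values in the statement. The whole argument is essentially bookkeeping, and I do not anticipate a conceptual obstacle beyond invoking (\ref{ineq-00}); the only point requiring a little care is tracking the ceiling at the boundary $k_2=\Delta$, which is precisely what produces the two-valued conclusion in part (2).
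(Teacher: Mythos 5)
Your proposal is correct and follows essentially the same route as the paper: the lower bound $|C|\ge\lceil|\mathcal{S}|/(\Delta-1)\rceil$ from $|\mathcal{S}_u|\le\Delta-1$, the upper bound obtained by rearranging inequality (\ref{ineq-00}), and a case-by-case substitution of the extremal value of $|\mathcal{S}|$ using the identities $n=a(2\Delta-1)+b$ and $b(\Delta-1)=k_1(2\Delta-1)+k_2$. The arithmetic you outline (numerators $\Delta(a(\Delta-1)+k_1)+k_2$ and $\Delta(a(\Delta-1)+k_1+1)$, and the ceiling analysis at $k_2=\Delta$) checks out and matches the paper's computation.
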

\begin{proof}
Since $|\mathcal{S}_u|\leq \Delta-1$, $|\mathcal{S}|\leq |C|(\Delta-1)$ and $|C|\geq |\mathcal{S}|/(\Delta-1)$.
Hence $|C|\geq a(\Delta-1)
+k_1+1$ if $k_2>\Delta$, and
$|C|\geq a(\Delta-1)+k_1$ if $k_2\leq \Delta$.

If $k_2\geq\Delta$, then by inequality (\ref{ineq-00}) we have
\begin{align*}
|C|\Delta&\leq n(\Delta-1)-|\mathcal{S}|=\Delta(a(\Delta-1)
+k_1+1).
\end{align*}
and $|C|\leq a(\Delta-1)+k_1+1$.
Therefore, $|C|=a(\Delta-1)
+k_1+1$ if $k_2>\Delta$, and
either $|C|=a(\Delta-1)+k_1 +1$ or $|C|=a(\Delta-1)+k_1$ if $k_2=\Delta$.
The first two statements hold.

We now prove that last statement. Suppose that $|C|>a(\Delta-1)+k_1$ as in inequality \eqref{eq-bigc}. Then by inequality \eqref{eq-exgraph-1}
$$a(\Delta-1)^2
+k_1(\Delta-1) = |\mathcal{S}| \le a(\Delta-1)^2+b(\Delta-1)-(k_1+1)\Delta,$$
which implies $k_2 \ge \Delta$, a contradiction.
\end{proof}

By Claim \ref{clm-star-2}, $|C|=a(\Delta-1)+k_1$ or $|C|=a(\Delta-1)+k_1+1$.
For the case $|C|=a(\Delta-1)+k_1$, we have $k_2\leq \Delta$.
It is easy to compute that $|C|\Delta=|\mathcal{S}|+|C|$.
Since $|\mathcal{S}_u|\leq \Delta-1$ and $|\mathcal{S}|=\sum_{u\in C}|\mathcal{S}_u|$, it follows that $|\mathcal{S}_u|=\Delta-1$ for each $u\in C$.
By inequality (\ref{eq-final-0}), we have $\sum_{u\in C}(\Delta-d_u)\geq |\mathcal{S}|+|C|$, and hence  $d_u=0$ for each $u\in C$.
It follows from the hypothesis that $\mathcal{S}$ is rainbow $K_{1,\Delta}$-free that $d_{\overrightarrow{D}}^-(v)\leq \Delta-1$ for each $v\in L$. Therefore, $\mathcal{S}$ satisfies all conditions of $(i)$, and hence $\mathcal{S}\in \mathfrak{A}(n,\Delta)$.

For the case  $|C|=a(\Delta-1)+k_1+1$, we have $k_2\geq \Delta$ and  $|L|(\Delta-1)=|\mathcal{S}|+|C|$. Hence all equalities of \eqref{eq-1}--(\ref{eq-final-0}) hold, which implies that $d_{\overrightarrow{D}}^+(u)=\Delta$ for each $u\in C$ and
$$\sum_{u\in C}d_u=|C|\Delta-(|\mathcal{S}|+|C|)=|C|(\Delta-1)-|\mathcal{S}|=2\Delta-1-k_2.$$
Therefore, $|A(\overrightarrow{D}[C])|=\sum_{u\in C}d_{\overrightarrow{H}}^-(u)=2\Delta-1-k_2.$
Note that by equalities in \eqref{eq-2} we have
$\sum_{v\in L}d_{\overrightarrow{D}}^-(v)=
|L|(\Delta-1),$
which together with $d_{\overrightarrow{D}}^-(v)\leq \Delta-1$ implies $d_{\overrightarrow{D}}^-(v)=\Delta-1$ for each $v\in L$.
Moreover, by the equality in (\ref{eq-final-0}) $|\mathcal{S}_u|=\Delta-1-d_u$ for each $u\in C$.
Recall that $d_{\overrightarrow{D}}^+(u)=\Delta$ for each $u\in C$, $\mathcal{S}_u$ consists of $\Delta-1-d_u$ copies of $K_{1,\Delta}$.
The above arguments shows that $\mathcal{S}$ satisfies all conditions of $(ii)$, and hence $\mathcal{S}\in \mathfrak{A}(n,\Delta)$.

It is worth noting that if $k_2=\Delta$, then $\mathcal{S}$ satisfies both conditions of $(i)$ and $(ii)$, which is in correspondence with
$(iii)$.

{\bf Case 2:} $a=0$.

We assume $\mathcal{S}$ contains no rainbow $K_{1,\Delta}$ and let
$$\ell^*=\frac{\sum_{u\in C}(\Delta-d_u-1)}{|C|}=\frac{\sum_{u\in C}(\Delta-d^+_{\overrightarrow{H}}(u)-1)}{|C|}.$$
By inequality (\ref{clm-star-1}),
$$|\mathcal{S}|=\sum_{u\in C}|\mathcal{S}_u|\leq\sum_{u\in C}(\Delta-d_u-1)=|C|\ell^*.$$
Then $|L|\geq \Delta-d^+_{\overrightarrow{H}}(u)$ for each vertex $u\in C$ since each vertex $u\in C$ has at least $\Delta-d^+_{\overrightarrow{H}}(u)$ out-neighbors in $L$,
It follows that $|L|\geq \ell^*+1$ and $n=|C|+|L|\geq |C|+\ell^*+1$.
Hence
$$|\mathcal{S}|\leq |C|\ell^*\leq \frac{(\ell^*+|C|)^2}{4}\leq \frac{(n-1)^2}{4}$$
and precisely $|\mathcal{S}|\leq \left\lfloor\frac{(n-1)^2}{4}\right\rfloor$.

Then we characterize all the cases in which $|\mathcal{S}|$ attains this value. When above equalities hold, we have $|C|=\ell^*=\frac{n-1}{2}$ when $n$ is odd and $\{|C|,\ell^*\}=\{\left\lfloor\frac{n-1}{2}\right\rfloor,\left\lceil\frac{n-1}{2}\right\rceil\}$
when $n$ is even.
Furthermore, $n=|C|+|L|=|C|+\ell^*+1$, which implies $|L|=\ell^*+1$.
Hence, if $n$ is odd, then $|C|=\left\lfloor\frac{n-1}{2}\right\rfloor$ and $|L|=\left\lceil\frac{n+1}{2}\right\rceil$; if $n$ is even, then either $|C|=\left\lfloor\frac{n-1}{2}\right\rfloor$ and $|L|=\left\lceil\frac{n+1}{2}\right\rceil$, or $|L|=\left\lfloor\frac{n+1}{2}\right\rfloor$ and $|C|=\left\lceil\frac{n-1}{2}\right\rceil$.

For each vertex $u\in C$, since $\ell^*+1=|L|\geq \Delta-d^+_{\overrightarrow{H}}(u)$, it follows that
 $\ell^*=|L|-1\geq \Delta-d^+_{\overrightarrow{H}}(u)-1$.
On the other hand, $|C|\ell^*=\sum_{u\in C}(\Delta-d_u-1)=\sum_{u\in C}(\Delta-d^+_{\overrightarrow{H}}(u)-1)$, which implies that
$\ell^*=|L|-1=\Delta-d^+_{\overrightarrow{H}}(u)-1$ and $d^+_{\overrightarrow{H}}(u)=\Delta-|L|$ for each $u\in C$ .
Since $\Delta\leq d_D^+(u)\leq d^+_{\overrightarrow{H}}(u)+|L|=\Delta$,
we have $d_D^+(u)=\Delta$ for each $u\in C$, which means that any two stars of $\mathcal{S}_u$ have common leaves and $L$ is contained in these leaves for each $u\in C$.
It follows that $d^+_{\overrightarrow{H}}(u)=\Delta-|L|$ and $\overrightarrow{D}[C]$ is $(\Delta-|L|)$-out-regular.
Since $|\mathcal{S}_u|\leq \Delta-1-d_{\overrightarrow{H}}^-(u)$ for each $u\in C$, it follows that
$$|\mathcal{S}|=\sum_{u\in C}|\mathcal{S}_u|\leq \sum_{u\in C}(\Delta-1-d_{\overrightarrow{H}}^-(u))= |C|(\Delta-1)-\sum_{u\in C}d^+_{\overrightarrow{H}}(u)=\left\lfloor\frac{(n-1)^2}{4}\right\rfloor=|\mathcal{S}|.$$
Hence, $|\mathcal{S}_u|=\Delta-1-d_{\overrightarrow{H}}^-(u)$ for each $u\in C$, and this number is positive since $d_{\overrightarrow{H}}^-(u)\leq |C|-1\leq \left\lceil\frac{n-1}{2}\right\rceil-1<\Delta-1$.
Therefore, $\mathcal{S}$ satisfies all conditions of $(iii)$, and  $\mathcal{S}\in \mathfrak{A}(n,\Delta)$.

\section{Proof of Theorem \ref{main-1}}\label{sec-thm2}



We shall give the proof of Theorem \ref{main-1} in this section. To this end, we first introduce the definition of $\mathfrak{B}(n,m)$.
\begin{definition}\label{def-2}
Let $2\leq n\leq m$ and $V$ be a vertex set of order $n$. We denote by $\mathfrak{B}(n,m)$ the set of families of $m(n-2)/n$ trees on $V$ such that
\begin{enumerate}
  \item $V$ is partitioned into $m/n$ parts $V_1,V_2,\ldots,V_{m/n}$, where $|V_i|=n$ for all $1 \le i \le m/n$;
  \item there are exactly $n-2$ spanning trees on $V_i$ for $1 \le i \le m/n$.
\end{enumerate}
\end{definition}

Next, we prove that if $\mathcal{T}=\{T_i: i\in[t]\}$ does not contain any rainbow tree of order $n$, then $|\mathcal{T}|\leq m(n-2)/n$.
Let $F_1$ be a maximum rainbow tree in $\mathcal{T}$ and $V(F_1)=U_1$.
Then we have $|U_1|\leq n-1$.
Let $\mathcal{S}_1=\{T_i:E(T_i)\cap E(F_1)\neq \emptyset\}$. Then we have the following claim.

\begin{claim}\label{clm-tree-1}
For each $T_i\in \mathcal{T}- \mathcal{S}_1$, $V(T_i)\cap U_1=\emptyset$.
\end{claim}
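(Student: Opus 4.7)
My plan is to argue by contradiction via the maximality of $F_1$: if some $T_i \in \mathcal{T} - \mathcal{S}_1$ meets $U_1$ in a vertex, I will extend $F_1$ by a single edge drawn from $T_i$ to obtain a strictly larger rainbow tree. Since $\mathcal{T}$ is assumed to contain no rainbow tree of order $n$, the maximum rainbow tree $F_1$ satisfies $|U_1| \leq n-1 < n = |V(T_i)|$, so under the contradictory assumption the vertex set $V(T_i)$ contains both a vertex inside $U_1$ and at least one vertex in $V - U_1$. Since $T_i$ is connected, the path in $T_i$ joining these two vertices must traverse the cut $\partial_{T_i}(U_1)$, producing an edge $e = xy \in E(T_i)$ with $x \in U_1$ and $y \in V - U_1$.

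The second step is to form $F_1 + e$ and verify that it is a rainbow tree. Because $y \notin V(F_1) = U_1$, adjoining $e$ to $F_1$ attaches $y$ as a pendant vertex, producing a tree on $U_1 \cup \{y\}$ with one more edge than $F_1$. To confirm the rainbow property, note that $T_i \notin \mathcal{S}_1$ gives $E(F_1) \cap E(T_i) = \emptyset$; hence $e \notin E(F_1)$ and, more importantly, the existing rainbow representation of $F_1$ assigns no edge to $T_i$. We may therefore legitimately designate $e$ as the edge contributed by $T_i$, exhibiting $F_1 + e$ as a rainbow tree on $|U_1|+1$ vertices and contradicting the maximality of $F_1$.

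I do not anticipate a serious obstacle; the argument is a routine edge-augmentation along a tree-path. The only point that requires a touch of care is the last one: the definition of rainbow demands ``at most one edge from each $T_j$,'' so one must check not merely that $e$ is not already in $F_1$, but also that $T_i$ itself is not already the source of some edge of $F_1$ in the rainbow representation. Both conditions follow instantly from $E(F_1) \cap E(T_i) = \emptyset$, which is exactly what $T_i \notin \mathcal{S}_1$ encodes, so the extension is valid and the claim follows.
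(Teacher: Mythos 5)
Your proposal is correct and follows essentially the same argument as the paper: assume some $T_i\in\mathcal{T}-\mathcal{S}_1$ meets $U_1$, use $|U_1|\le n-1<|V(T_i)|$ to find an edge of $T_i$ in $\partial_{T_i}(U_1)$, and adjoin it to $F_1$ to contradict maximality. You merely spell out in more detail the two points the paper leaves implicit (why the cut is nonempty, and why $E(F_1)\cap E(T_i)=\emptyset$ lets $T_i$ supply the new edge without violating the rainbow condition), both of which are handled correctly.
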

\begin{proof}
Suppose to the contrary that $V(T_i)\cap U_1\neq \emptyset$ for some $T_i\in \mathcal{T}- \mathcal{S}_1$.
Since $|U_1|\leq n-1$ and $|V(T_i)|=n$, it follows that $\partial_{T_i}(U_1)\neq \emptyset$.
If we choose an edge $f$ of $\partial_{T_i}(U_1)$, then $F_1\cup f$ is a larger rainbow tree, which contradicts the maximality of $F_1$.
\end{proof}

By Claim \ref{clm-tree-1}, $\mathcal{T}_1=\mathcal{T}- \mathcal{S}_1$ is a collection of trees of order $n$ on vertex set $V_1=V-U_1$.
Let $F_2$ be a maximum rainbow tree of $\mathcal{T}_1$, $V(F_2)=U_2$ and $\mathcal{S}_2=\{T_i:E(T_i)\cap E(F_2)\neq \emptyset\}$.
Then $|U_2|\leq n-1$ and
$\mathcal{T}_2=\mathcal{T}-\mathcal{S}_1-\mathcal{S}_2$ is a collection of trees of order $n$ on vertex set $V_2=V-U_1-U_2$ by Claim \ref{clm-tree-1}.
Continuing this process until $V_t=V-\cup_{i=1}^sU_i=\emptyset$,
we will obtain a sequence of rainbow trees $F_1,F_2,\ldots,F_s$ satisfying:
\begin{enumerate}
  \item $F_i$ is a maximum rainbow tree in $\mathcal{T}_{i-1}$ for each $i\in[s]$, where we set $\mathcal{T}_0=\mathcal{T}$;
  \item for each $i\in[s]$, $V(F_i)=U_i$, $\mathcal{S}_i=\{T_j:E(T_j)\cap E(F_i)\neq \emptyset\}$, $\mathcal{T}_i=\mathcal{T}-\bigcup_{j\in[i]}\mathcal{S}_j$, $V_i=V-\bigcup_{j\in[i]}U_i$, and $\mathcal{T}_i$ is a collection of trees of order $n$ on vertex set $V_i$;
  \item we may have $|U_i|=1$ for some $i \in [s]$, in which case    $\mathcal{T}_{i-1}=\mathcal{T}_i=\emptyset$.
\end{enumerate}
Since each $\mathcal{T}_i$ is a subset of $\mathcal{T}$, $\mathcal{T}_i$ does not contain any rainbow tree of order $n$.
Hence $|V(F_i)|\leq n-1$ for each $i\in[s]$.
Note that $\mathcal{S}_1,\mathcal{S}_2, \ldots,\mathcal{S}_{s}$ forms a partition of $\mathcal{T}$.
Hence
\begin{equation}\label{eq-uit}
t = \sum_{i\in[s]}|\mathcal{S}_i|=\sum_{i\in[s]}|E(F_i)|=\sum_{i\in[s]}(|U_i|-1).
\end{equation}
Similarly, $U_1, U_2,\ldots,U_s$ forms a partition of $V$ and we have $\sum_{i\in[s]}|U_i|=|V|=m$.
Then
\begin{align}\label{ineq-2}
t=m-s.
\end{align}

To show that $t\leq m(n-2)/n$, we need to prove that $s\geq 2m/n$, the proof of which would be separated into several claims. At first,
for each $2\leq i\leq s$, let
$$\mathcal{R}_i=\{T_j\in \bigcup_{\ell\in[i-1]}\mathcal{S}_\ell: V(T_j)\cap U_i\neq \emptyset\}$$
and $|\mathcal{R}_i|=r_i$.

\begin{claim}\label{clm-tree-2}

For each $T_k\in \mathcal{R}_i$ and each edge $f\in T_\ell$ with one endpoint in $U_i$ the other endpoint in $V-U_i$, there is a rainbow tree $T^*$ such that $F_i\cup f$ is a rainbow subtree of $T^*$,  $|T^*|=|U_i|+r_i$ and the edges in $E(T^*)-E(F_i)$ come from distinct trees in $\mathcal{R}_i$. Moreover, $|U_i|+r_i\leq n-1$ for each $2\leq i\leq s$.
\end{claim}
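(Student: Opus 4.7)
The plan is to build $T^{*}$ greedily, starting from $F_i\cup f$ and repeatedly appending one boundary edge drawn from a previously unused tree in $\mathcal{R}_i$. The upper bound $|U_i|+r_i\le n-1$ will then follow from the no-rainbow-$n$-tree hypothesis applied to the finished $T^{*}$.

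First I would note that the colors already used by $F_i$ are disjoint from those available in $\mathcal{R}_i$: the edges of $F_i$ come from trees in $\mathcal{T}_{i-1}$, whereas every tree in $\mathcal{R}_i$ lies in $\bigcup_{\ell<i}\mathcal{S}_\ell=\mathcal{T}\setminus\mathcal{T}_{i-1}$. Hence picking a single edge from each distinct $T_{k'}\in\mathcal{R}_i$ automatically preserves rainbow-ness. I would set $T^{(1)}=F_i\cup f$, a rainbow tree on $|U_i|+1$ vertices, and proceed by induction on $j$: given a rainbow tree $T^{(j)}$ of order $|U_i|+j$ that contains $F_i$ and whose non-$F_i$ edges are drawn from $j$ distinct trees of $\mathcal{R}_i$, I would choose any unused $T_{k'}\in\mathcal{R}_i$. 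Because $T_{k'}\cap U_i\neq\emptyset$ and $U_i\subseteq V(T^{(j)})$, once I know $V(T_{k'})\not\subseteq V(T^{(j)})$ the connectedness of $T_{k'}$ produces a boundary edge $f'$, and $T^{(j+1)}=T^{(j)}\cup f'$ is again a rainbow tree, now of order $|U_i|+j+1$.

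The sole delicate point, and the main obstacle, is guaranteeing $V(T_{k'})\not\subseteq V(T^{(j)})$ at every step of the induction. Since $|V(T_{k'})|=n$, this is the same as requiring $|V(T^{(j)})|<n$ throughout. This is exactly where the standing assumption that $\mathcal{T}$ contains no rainbow tree of order $n$ enters: if $|V(T^{(j)})|\ge n$ ever occurred, then $T^{(j)}$ would contain a rainbow subtree on exactly $n$ vertices, contradicting the hypothesis. Hence the induction runs through all $j=1,2,\ldots,r_i$, delivering the desired $T^{*}$ on $|U_i|+r_i$ vertices whose non-$F_i$ edges come from pairwise distinct members of $\mathcal{R}_i$; applying the same no-rainbow-$n$-tree assumption to $T^{*}$ itself then forces $|U_i|+r_i\le n-1$, proving the moreover part.
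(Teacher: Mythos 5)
Your proposal is correct and follows essentially the same greedy argument as the paper: starting from $F_i\cup f$, repeatedly attach a boundary edge from an unused tree of $\mathcal{R}_i$ (which exists because that tree meets $U_i$, has $n$ vertices, and the current rainbow tree has fewer than $n$ vertices by the no-rainbow-$n$-tree hypothesis), and then apply the same hypothesis to the final tree to get $|U_i|+r_i\le n-1$. Your explicit remark that the colors of $F_i$ (drawn from $\mathcal{T}_{i-1}$) are disjoint from those of $\mathcal{R}_i\subseteq\bigcup_{\ell<i}\mathcal{S}_\ell$ is a point the paper leaves implicit, but the substance is identical.
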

\begin{proof}
Let $\mathcal{R}_i=\{T_{c_1},T_{c_2},\cdots,T_{c_{r_i}}\}$.
Without loss of generality, we may assume $k=c_1$ and set $f=f_1$. Then it is straightforward to verify that $F_i\cup f_1$ is a rainbow tree.
Since $\mathcal{T}$ does not contain any rainbow tree of order $n$, we have $|F_i\cup f_1|<n$. Further, there exists an edge $f_2$ of $T_{c_2}$ belonging to $\partial_{T_{c_2}}(V(F_i\cup f_1))$ since $V(T_{c_2})\cap U_i\neq \emptyset$ and $|T_{c_2}|=n>|F_i\cup f_1|$.
Then $F_i\cup \{f_1,f_2\}$ is again a rainbow tree.
Repeating this process, we will get a rainbow tree $T^* = F_i\cup \{f_1,f_2,\ldots,f_{r_i}\}$ with $f_j\in E(T_{c_j})$ for each $j\in[r_i]$.
It is easy to see that $T^*$ is a rainbow tree of order $|U_i|+r_i$ containing $F_i\cup f$ as a rainbow subtree, and $|U_i|+r_i\leq n-1$ since $\mathcal{T}$ does not contain any rainbow tree of order $n$.
\end{proof}

\begin{claim}\label{clm-tree-3}
$\sum_{2\leq i\leq s}r_i\geq \sum_{s\in[t-1]}(|U_i|-1)$.
\end{claim}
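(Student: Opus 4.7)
The plan is to prove Claim \ref{clm-tree-3} by a double-counting argument that shows each tree $T_j\in \mathcal{S}_\ell$ with $\ell<s$ is charged to at least one $\mathcal{R}_i$ with $i>\ell$. The key observation will be that every tree used in $F_\ell$ is confined, as a vertex set, to the still-unused part of $V$ at stage $\ell$, and since this vertex set has order $n$ while $|U_\ell|\le n-1$, the tree must poke out into some later $U_i$.

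More precisely, I would proceed in three short steps. First, fix $\ell\in[s-1]$ and $T_j\in \mathcal{S}_\ell$. Since $T_j$ contributes an edge to $F_\ell$, we have $T_j\in \mathcal{T}_{\ell-1}$, and iterated application of Claim \ref{clm-tree-1} to the construction of the sequence $F_1,F_2,\ldots,F_{\ell-1}$ gives
\[
V(T_j)\subseteq V_{\ell-1}=V-\bigcup_{j\in[\ell-1]}U_j=U_\ell\cup U_{\ell+1}\cup\cdots\cup U_s.
\]
Second, I would use the counting inequality $|V(T_j)|=n>n-1\ge |U_\ell|$: this forces the existence of some index $i\in\{\ell+1,\ldots,s\}$ with $V(T_j)\cap U_i\neq\emptyset$, so by definition $T_j\in \mathcal{R}_i$.

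Third, I would package this into the claimed bound by double counting. Since $\mathcal{S}_1,\mathcal{S}_2,\ldots,\mathcal{S}_s$ partition $\mathcal{T}$, the trees in $\bigcup_{\ell\in[s-1]}\mathcal{S}_\ell$ are pairwise distinct, and each of them has just been shown to lie in at least one $\mathcal{R}_i$ with $i\ge 2$. Therefore
\[
\sum_{i=2}^{s}r_i=\sum_{i=2}^{s}|\mathcal{R}_i|\ge \Bigl|\bigcup_{\ell\in[s-1]}\mathcal{S}_\ell\Bigr|=\sum_{\ell\in[s-1]}|\mathcal{S}_\ell|=\sum_{\ell\in[s-1]}\bigl(|U_\ell|-1\bigr),
\]
using $|\mathcal{S}_\ell|=|E(F_\ell)|=|U_\ell|-1$ from equation (\ref{eq-uit}). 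This gives exactly the stated inequality (reading the right-hand side of the claim as $\sum_{i\in[s-1]}(|U_i|-1)$).

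I do not foresee any real obstacle here; the substance of the argument is entirely carried by Claim \ref{clm-tree-1} together with the a priori bound $|U_\ell|\le n-1$, both of which are already in hand. The only small care needed is in verifying the iterated use of Claim \ref{clm-tree-1}, i.e. that every $T_j\in \mathcal{T}_{\ell-1}$ genuinely has its vertex set inside $V_{\ell-1}$, which is immediate by induction on $\ell$: $\mathcal{T}_0=\mathcal{T}$ sits on $V=V_0$, and the inductive step is exactly the content of Claim \ref{clm-tree-1} applied to $F_{\ell-1}$.
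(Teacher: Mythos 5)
Your proof is correct and follows essentially the same route as the paper: both arguments show that each tree $T_j\in\mathcal{S}_\ell$ with $\ell<s$ must meet some later block $U_i$ (the paper via $\partial_{T_j}(U_\ell)\neq\emptyset$, you via $|V(T_j)|=n>|U_\ell|$ together with $V(T_j)\subseteq U_\ell\cup\cdots\cup U_s$), hence lies in some $\mathcal{R}_i$, and then conclude by double counting over the partition $\mathcal{S}_1,\ldots,\mathcal{S}_s$. You also correctly read the claim's right-hand side as $\sum_{i\in[s-1]}(|U_i|-1)$, fixing the index typo in the statement.
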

\begin{proof}
For an integer $i\in[s-1]$, let $T$ be an arbitrary tree of $\mathcal{S}_i$.
By the definition of $F_i$, we have that $V(T)\cap U_j=\emptyset$ for each $j<i$.
However, it follows from $\partial_{T}(U_i)\neq \emptyset$ that there exists an integer $\ell$ with $i< \ell\leq s$ such that there is an edge in $\partial_{T}(U_i)$ connecting $U_i$ and $U_\ell$.
Since $i\in[t-1]$ and $T\in\mathcal{S}_i$ is chosen arbitrarily,  each tree of $\bigcup_{i\in[s-1]}\mathcal{S}_i$ contributes at least one to $\sum_{2\leq i\leq t}r_i$.
Thus,
$$\sum_{2\leq i\leq s}r_i\geq \left|\bigcup_{i\in[s-1]}\mathcal{S}_i\right|= \sum_{i\in[s-1]}|\mathcal{S}_i|= \sum_{i\in[s-1]}(|U_i|-1).$$
\end{proof}

\begin{claim}\label{clm-tree-4}
$|U_s|=1$.
\end{claim}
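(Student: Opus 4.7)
The plan is to argue by contradiction, exploiting the fact that $V_s = \emptyset$ (the terminating condition) forces $U_s$ to be exactly $V_{s-1}$, combined with the constraint that any tree in $\mathcal{T}_{s-1}$ has order $n$ on the vertex set $V_{s-1}$.

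First I would suppose, toward a contradiction, that $|U_s| \geq 2$. Then $F_s$ is a nontrivial rainbow tree, so it must have been produced as a genuine maximum rainbow tree of some nonempty subfamily; that is, $\mathcal{T}_{s-1} \neq \emptyset$. By the inductive description of the process, $\mathcal{T}_{s-1}$ is a collection of trees of order $n$ on the vertex set $V_{s-1}$, so the mere existence of an element of $\mathcal{T}_{s-1}$ forces $|V_{s-1}| \geq n$.

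Next I would use the termination condition $V_s = V_{s-1} - U_s = \emptyset$ to conclude $V_{s-1} = U_s$, together with the rainbow-tree-freeness hypothesis on $\mathcal{T}$ (equivalently, on $\mathcal{T}_{s-1} \subseteq \mathcal{T}$), which yields $|U_s| = |V(F_s)| \leq n-1$. Chaining these inequalities gives
\[
n \;\leq\; |V_{s-1}| \;=\; |U_s| \;\leq\; n-1,
\]
an immediate contradiction. Hence $\mathcal{T}_{s-1} = \emptyset$, and by the third bullet in the description of the process this is precisely the scenario in which $|U_s| = 1$.

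I do not anticipate any obstacle here; the argument is just an unpacking of the definitions. The only subtlety to flag clearly is why, when $\mathcal{T}_{s-1}$ is nonempty, its elements must have vertex set contained in $V_{s-1}$—this follows from iterating Claim \ref{clm-tree-1}, which guarantees that every surviving tree at stage $i$ is disjoint from $\bigcup_{j \leq i} U_j$.
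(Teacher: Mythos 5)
Your argument is correct and is essentially the paper's proof: both assume $|U_s|\geq 2$, deduce $\mathcal{T}_{s-1}\neq\emptyset$ and hence $|V_{s-1}|\geq n$, and derive a contradiction from $|U_s|\leq n-1$ together with the termination condition $V_s=V_{s-1}-U_s=\emptyset$. The only cosmetic difference is that the paper phrases the contradiction as "$V_s$ would still be nonempty, so an $F_{s+1}$ would exist," while you phrase it as the inequality chain $n\leq|V_{s-1}|=|U_s|\leq n-1$; these are the same argument.
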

\begin{proof}
Suppose to the contrary that $|U_s|\geq 2$, which implies $\mathcal{S}_t\neq \emptyset$.
Note that $\mathcal{S}_s$ is a collection of trees of order $n$ on vertex set $V_{s-1}$ and hence $|V_{s-1}|\geq n$. Then
$V_s=V_{t-1}-U_s$ contains at least one vertex since $|U_s|\leq n-1$. It follows that there will exists an $F_{s+1}$, a contradiction.
\end{proof}

By Claim \ref{clm-tree-2}, \ref{clm-tree-3}, \ref{clm-tree-4}, identity \eqref{eq-uit} and \eqref{ineq-2}, we have
\begin{equation}\label{inq-ex-2}
\begin{split}
2(m-s) &= 2\sum_{i\in[s]}(|U_i|-1)\\
&=\sum_{i\in[s]}(|U_i|-1)+\sum_{i\in[s-1]}(|U_i|-1)\\
&\leq
\sum_{i\in[s]}(|U_i|-1)+\sum_{2\leq i\leq s}r_i\\
&=(|U_1|-1)+\sum_{2\leq i\leq s}(|U_i|-1+r_i)\\
&\leq n-2+(s-1)(n-2)\\
&=s(n-2),
\end{split}
\end{equation}
$$2(m-t)=2\sum_{i\in[t]}(|U_i|-1)\leq t(n-2),$$
and hence
$$s\geq \frac{2m}{n}, \quad t=m-s\leq m-\frac{2m}{n}=\frac{m}{n}(n-2).$$
Hence, $|\mathcal{T}|\leq m(n-2)/n$.

We now characterize all collections $\mathcal{T}$ with $|\mathcal{T}|= m(n-2)/n$ by induction.
It is obvious that $\mathcal{T}\in \mathfrak{B}(n,m)$ if $m=n$.
Note that $|\mathcal{T}|= m(n-2)/n$ indicates that $t=2m/n$ and all equalities in Claim \ref{clm-tree-2}, \ref{clm-tree-3} and inequality (\ref{inq-ex-2}) hold. Then
\begin{align*}
|U_i|+r_i= n-1
\end{align*}
for each $2\leq i\leq s$, and
each $T\in\mathcal{S}_i$ contributes exactly one to $\sum_{2\leq i\leq s}r_i$, where $i\in[t-1]$. This implies that for each $T\in \mathcal{T}$, there exist exactly two indices $j,j'\in[t-1]$ such that $V(T)\subseteq U_j\cup U_{j'}$, $V(T)\cap U_j\neq \emptyset$ and $V(T)\cap U_{j'}\neq \emptyset$. For the case $i = s$, we have
$r_s=n-2$ since $|U_s| = 1$ by Claim \ref{clm-tree-4}.
Moreover, there are exactly $n-2$ trees of $\mathcal{T}$, say $T_{p_1},T_{p_2},\ldots,T_{p_{n-2}}$, such that $u\in V(T_{p_i})$ for each $i\in[n-2]$, where we set $U_s=\{u\}$.
Assume that $T_{p_i}\in\mathcal{S}_{q_i}$ for each $i\in[n-2]$.
Then $V(T_{p_i})\subseteq U_{q_i}\cup\{u\}$ for each $i\in[n-1]$.
Since $|T_{p_i}|=n$, it follows that $|U_{q_i}|=n-1$ and $V(T_{p_i})= U_{q_i}\cup\{u\}$ for each $i\in[n-1]$.

\begin{claim}
$q_1=q_2=\cdots=q_{n-2}$.
\end{claim}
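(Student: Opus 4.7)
The plan is to argue by contradiction: suppose without loss of generality that $q_\alpha<q_\beta$ for some $\alpha,\beta\in[n-2]$, and build a rainbow tree of order $n$ in $\mathcal{T}$, contradicting the hypothesis that $\mathcal{T}$ contains no such tree.

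First I would produce a rainbow ``cherry'' at $u$. Since $T_{p_\alpha}$ is a spanning tree of $U_{q_\alpha}\cup\{u\}$ and is connected, $u$ has at least one neighbour $v_\alpha\in N_{T_{p_\alpha}}(u)\subseteq U_{q_\alpha}$; similarly pick a neighbour $v_\beta\in N_{T_{p_\beta}}(u)\subseteq U_{q_\beta}$. The two edges $uv_\alpha\in E(T_{p_\alpha})$ and $uv_\beta\in E(T_{p_\beta})$ come from distinct trees and form a rainbow path of length $2$.

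Next I would extend this cherry on each side using the other trees of $\mathcal{S}_{q_\alpha}$ and $\mathcal{S}_{q_\beta}$. Let $e_{p_\alpha}$ denote the unique edge of $T_{p_\alpha}$ that lies in $F_{q_\alpha}$; deleting it splits $F_{q_\alpha}$ into two rainbow subtrees on components $A_1,A_2$ of $U_{q_\alpha}$ with $|A_1|+|A_2|=n-1$, whose edges come from $\mathcal{S}_{q_\alpha}\setminus\{T_{p_\alpha}\}$. Let $A\in\{A_1,A_2\}$ be the component containing $v_\alpha$ and define $B\subseteq U_{q_\beta}$ analogously. Then
\[
R:=F_{q_\alpha}[A]\cup F_{q_\beta}[B]\cup\{uv_\alpha,uv_\beta\}
\]
is a tree on $A\cup B\cup\{u\}$ with $|A|+|B|+1$ vertices; it is rainbow because $\mathcal{S}_{q_\alpha}\cap\mathcal{S}_{q_\beta}=\emptyset$ and no tree is reused.

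The final step, and the main obstacle, is to guarantee $|R|\geq n$, that is $|A|+|B|\geq n-1$, which would produce the forbidden rainbow tree of order $n$ and complete the contradiction. By exploiting the freedom in choosing $v_\alpha$ among all neighbours of $u$ in $T_{p_\alpha}$, one typically places $v_\alpha$ in the larger of $A_1,A_2$ (of size at least $\lceil(n-1)/2\rceil$), and analogously for $v_\beta$, yielding $|A|+|B|\geq 2\lceil(n-1)/2\rceil\geq n-1$. The delicate corner case is when $N_{T_{p_\alpha}}(u)$ lies entirely inside the smaller component of $F_{q_\alpha}-e_{p_\alpha}$ (symmetrically on the $\beta$-side). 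I anticipate handling this via a matroid-style swap on $F_{q_\alpha}$: every edge of $T_{p_\alpha}$ inside $U_{q_\alpha}$ closes a unique cycle with $F_{q_\alpha}$, and replacing $e_{p_\alpha}$ by another edge of $T_{p_\alpha}$ lying on the $A_1$--$A_2$ cut yields a different maximum rainbow tree whose new split is compatible with some $v_\alpha\in N_{T_{p_\alpha}}(u)$ in the larger part; making this swap rigorous, and verifying that the resulting object is still a maximum rainbow tree in $\mathcal{T}_{q_\alpha-1}$, is where the technical work will be concentrated.
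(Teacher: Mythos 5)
There is a genuine gap, and it sits exactly where you flagged it: the inequality $|A|+|B|\ge n-1$ can fail, and the matroid-style swap you propose cannot repair it. The obstruction is not which edge of $T_{p_\alpha}$ happens to lie in $F_{q_\alpha}$, but that the \emph{other} trees of $\mathcal{S}_{q_\alpha}$ may have very few edges inside $U_{q_\alpha}$, so that every admissible rainbow spanning tree of $U_{q_\alpha}$ isolates all of $N_{T_{p_\alpha}}(u)$ once the $T_{p_\alpha}$-edge is deleted. Concretely, take $n=4$, $U_{q_\alpha}=\{a_1,a_2,a_3\}$, let $T_{p_\alpha}$ be the star at $a_3$ with leaves $a_1,a_2,u$, and let the second tree of $\mathcal{S}_{q_\alpha}$ be the path $a_1a_2,\ a_2w,\ wa_3$ for some $w\notin U_{q_\alpha}\cup\{u\}$, so its only edge inside $U_{q_\alpha}$ is $a_1a_2$. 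Then every rainbow spanning tree of $U_{q_\alpha}$ has edge set $\{a_1a_2,\,a_ia_3\}$ for some $i\in\{1,2\}$, and deleting the $T_{p_\alpha}$-edge always leaves $a_3$ --- the unique neighbour of $u$ in $T_{p_\alpha}$ --- in a singleton component; both candidate $T_{p_\alpha}$-edges lie on the same cut, so the swap changes nothing and $|A|=1$ regardless. With a symmetric configuration on the $\beta$-side your $R$ has order $3<n$. A rainbow tree of order $4$ does exist there (e.g.\ $ua_3\in T_{p_\alpha}$, $ub_3\in T_{p_\beta}$ with $b_3\in N_{T_{p_\beta}}(u)$, and $a_3w$ from the second tree of $\mathcal{S}_{q_\alpha}$), but it uses an edge leaving $U_{q_\alpha}$, which your construction --- confined to edges of the fixed trees $F_{q_\alpha}$ and $F_{q_\beta}$, hence to $U_{q_\alpha}\cup U_{q_\beta}\cup\{u\}$ --- can never produce.

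This is also where your route diverges from the paper's, which does not try to reuse the trees $F_{q_i}$ wholesale. It picks a tree $T_k\in\mathcal{S}_{q_1}$ with $k\notin\{p_1,\dots,p_{n-2}\}$ (such a tree exists precisely when the claim fails), takes its crossing edge $f$ into a later block $U_\ell$, and invokes Claim \ref{clm-tree-2} to grow $F_\ell\cup f$ greedily into a rainbow tree $T^*$ of order $|U_\ell|+r_\ell=n-1$ using one \emph{arbitrary} boundary edge from each tree of $\mathcal{R}_\ell$; the $n$-th vertex is then supplied by an edge of $T_{p_1}$, which is still unused because $V(T_{p_1})=U_{q_1}\cup\{u\}$ is disjoint from $U_\ell$, so $T_{p_1}\notin\mathcal{R}_\ell$. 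To salvage your two-sided gluing at $u$ you would have to replace the rigid pieces $F_{q_\alpha}[A]$ and $F_{q_\beta}[B]$ by such a greedy extension through arbitrary unused trees meeting the growing tree --- at which point you have essentially reconstructed the paper's argument.
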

\begin{proof}
Let
$I=\{p_i: i\in[n-2] \mbox{ and } q_i=q_1\}$ and $I'=C(F_{q_1})-I$, where $C(F_{q_1})=\{i:T_i\in \mathcal{S}_{q_1}\}$.
By contradiction, we suppose that $|I|<n-2$.
Then $I'\neq \emptyset$ since $|U_{q_i}|=n-1$. Selecting one element $k\in I'$, we have $k\notin\{p_1,p_2,\ldots,p_{n-2}\}$ since $T_k \in \mathcal{S}_{q_1}$.
Then there exists an integer $q_1 < \ell < s$ such that $V(T_k)\subseteq U_{q_1}\cup U_\ell$ and an edge $f\in E(T_k)$ joining $U_\ell$ and $U_{q_1}$, and by definition $T_{q_1}\in \mathcal{R}_{\ell}$.

Note that $F_\ell\cup f$ is a rainbow tree and recall $|U_{\ell}|+r_{\ell}=n-1$, and then
by Claim \ref{clm-tree-2} we can construct a rainbow tree $T^*$ containing $F_\ell\cup f$ such that $|T^*|=n-1$.
It follows from $V(T_{p_1})= U_{q_1}\cup\{u\}$ that $V(T_{p_1})\cap U_\ell=\emptyset$ and $T_{p_1} \notin \mathcal{R}_l$.
Then $p_1\notin C(T^*)$, where $C(T^*)=\{i\in[t]:E(T_i)\cap E(T^*)\neq\emptyset\}$.
Since $V(T^*)\cap V(T_{p_1})\neq \emptyset$ and $|T^*|<|T_{p_1}|$, it follows that $\partial_{T_{p_1}}(V(T^*))\neq \emptyset$, say $f'\in \partial_{T_{p_1}}(V(T^*))$.
Thus, $T^*\cup f'$ is a rainbow tree of order $n$, a contradiction.
\end{proof}

By the above claim, we may let $q=q_1=q_2=\ldots=q_{n-2}$. Then
$$\mathcal{S}_{q}=\{T_{p_1},T_{p_2},\ldots,T_{p_{n-2}}\}$$
since $|\mathcal{S}_q| = |U_q|-1 \le n-2$.
Hence, $F_q$ is a rainbow tree of order $n-1$ whose edges come from $T_{p_1},T_{p_2},\ldots,T_{p_{n-2}}$.
Since $u\in V(T_{p_i})$ for each $i\in[n-2]$, it follows that $T_{p_1},T_{p_2},\ldots,T_{p_{n-2}}$ are $n-2$ trees on vertex set $V'=U_q\cup \{u\}$.

Suppose that there exists some $T\in\mathcal{T}-\mathcal{S}_{q}$ and $v \in V(T)\cap V'$. Then $v \neq u$ since $\mathcal{S}_q = \mathcal{R}_s$.
It follows that $v \in U_q \cap V(T)$ and $\partial_{T}(U_q) \neq \emptyset$, from which we can construct a rainbow tree of order $n$, a contradiction. Hence $V(T)\cap V'=\emptyset$ for each $T\in\mathcal{T}-\mathcal{S}_{q}$ and $\mathcal{F}=\mathcal{T}-\mathcal{S}_{q}$ is a collection of  $(m-n)(n-2)/n$ trees of order $n$ on vertex set $V-V'$ with $|V-V'|=m-n$.
Since $\mathcal{F}$ also contains no rainbow tree of order $n$, we have $\mathcal{F}\in \mathfrak{B}(n,m-n)$ and $\mathcal{T}\in \mathfrak{B}(n,m)$ by induction.

\section{Concluding remarks}
In this paper, we mainly consider collections of two types of tree structures: stars and general trees.
It is natural to ask
how many Hamiltonian paths on vertex set $V$ ($|V|=n$) contains a rainbow Hamiltonian path.
It is clear that the minimum number of Hamiltonian paths that are needed in the this question is at least $n-1$, since a rainbow Hamiltonian path has $n-1$ edges.
In graphic matroids, a Hamiltonian path (or more generally, a spanning tree) is a basis. As mentioned in Introduction, the validness Rota's basis conjecture will imply that $n-1$ Hamiltonian paths on $V$ can be decomposed into $n-1$ rainbow spanning trees. Therefore, when we assume the conjecture is true, another natural problem is that whether this question is a corollary of Rota's basis conjecture. This surmise seems to be plausible because in this case there are at most $2(n-1)$ edges incident with each vertex of $V$, and these edges are partitioned into $n-1$ nonempty sets in the decomposition.

\section{Acknowledgements}

Ping Li is supported by the National Natural Science Foundation of China (No. 12201375). Luyi Li is also supported by the Tianjin Research Innovation Project for Postgraduate Students (No. 2022BKY039). Ethan Li is supported by the Fundamental Research Funds for the Central Universities (No. GK202207023).

\end{document}